\documentclass[11pt]{amsart}

\usepackage[T1]{fontenc}
\usepackage{lmodern}
\usepackage{geometry}
\usepackage{comment}
\usepackage{amsmath}
\usepackage{amssymb}
\usepackage{amsfonts}
\usepackage{mathrsfs}
\usepackage{enumitem}
\usepackage{graphicx}

\theoremstyle{plain}
\newtheorem{theorem}{Theorem}[section]
\newtheorem{lemma}[theorem]{Lemma}

\theoremstyle{definition}
\newtheorem{definition}[theorem]{Definition}

\theoremstyle{remark}

\title[On the weak operator Daugavet property and poor subspaces]
{On the weak operator Daugavet property and poor subspaces}

\author{Samir Hamad}

\email{samih49@zedat.fu-berlin.de}

\date{August 2026}

\subjclass[2020]{46B04, 46B20, 46B28}
\keywords{Daugavet property; weak operator Daugavet property; poor subspaces}

\begin{document}

\begin{abstract}
We will provide an example of a Banach space with the Daugavet property which does not possess the weak operator Daugavet property. We will also show that every separable Banach space with the Daugavet property contains a non-poor subspace isomorphic to $\ell^1$ while noticing that every separable subspace of $L^\infty[0,1]$ is poor.
\end{abstract}

\maketitle

\section{Introduction}
We say that a normed space $X$ has the \emph{Daugavet property}, shortly $X \in \mathrm{DPr}$, if
\[
\|\mathrm{Id} + T\| = 1 + \|T\|,
\]
for every rank-one operator $T \colon X \to X$. By passing to adjoint operators, the following characterization was obtained in \cite[Lemma~2.2]{kadets2000banach}.

\begin{theorem}
Let $X$ be a normed space. The following assertions are equivalent:
\begin{enumerate}
\item $X$ has the Daugavet property.
\item For every $y \in S_X$, $x^* \in S_{X^*}$, and every $\varepsilon > 0$, there exists $x \in S_X$ such that
\[
x^*(x) \ge 1 - \varepsilon
\quad \text{and} \quad
\|x + y\| \ge 2 - \varepsilon.
\]
\item For every $\varepsilon > 0$ and every $y \in S_X$, the closed convex hull of
\[
\{\, u \in (1+\varepsilon) B_X \mid \|y + u\| \ge 2 - \varepsilon \,\}
\]
contains $S_X$.
\end{enumerate}
\end{theorem}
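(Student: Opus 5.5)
I will prove the cycle (1)$\Rightarrow$(2)$\Rightarrow$(3)$\Rightarrow$(1), in the usual way by passing between rank-one operators and their adjoints.

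\emph{(1)$\Rightarrow$(2).} Fix $y\in S_X$, $x^*\in S_{X^*}$ and $\varepsilon>0$, and consider the rank-one operator $T=x^*\otimes y$, that is, $Tx=x^*(x)y$, which has norm $1$. By (1), $\|\mathrm{Id}+T\|=2$. Passing to adjoints gives $\|\mathrm{Id}^*+T^*\|=2$, where $T^*=y\otimes x^*$, i.e.\ $T^*z^*=z^*(y)x^*$. Hence there is $z^*\in S_{X^*}$ with
\[
\|z^*+z^*(y)x^*\|>2-\delta .
\]
This forces $|z^*(y)|>1-\delta$. After multiplying $z^*$ by a unimodular scalar (which does not change the left-hand side), we may assume $z^*(y)$ is real and close to $1$, so that $\|z^*+x^*\|>2-2\delta$. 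Now choose $x\in S_X$ with $(z^*+x^*)(x)>2-2\delta$. Then $x^*(x)>1-2\delta$, and also
\[
\|x+y\|\ge z^*(x)+z^*(y)>(1-2\delta)+(1-\delta).
\]
Taking $\delta=\varepsilon/3$ gives (2). In the complex case the same argument works using real parts throughout.

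\emph{(2)$\Rightarrow$(3).} Let $C$ be the closed convex hull in (3). Suppose some $x_0\in S_X$ is not in $C$. By Hahn--Banach there is $x^*\in S_{X^*}$ with
\[
\operatorname{Re}x^*(x_0)>\sup_{u\in C}\operatorname{Re}x^*(u).
\]
The left-hand side is at most $1$, so $\sup_{u\in C}\operatorname{Re}x^*(u)<1$. Now apply (2) to $y$, $x^*$ and a small $\varepsilon'$. This gives $x\in S_X$ lying in the set, since $\|x\|=1\le 1+\varepsilon$, and with $\operatorname{Re}x^*(x)\ge 1-\varepsilon'$. Choosing $\varepsilon'$ small enough contradicts the separation. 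The only delicate point is the inequality $x^*(x)\ge 1-\varepsilon$ when scalars are complex, and we read it as a statement about real parts.

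\emph{(3)$\Rightarrow$(1).} Let $T=x^*\otimes y$ with $\|x^*\|=\|y\|=1$; by homogeneity it suffices to treat norm-one rank-one operators. Fix $\varepsilon>0$ and pick $x_0\in S_X$ with $x^*(x_0)$ real and greater than $1-\varepsilon$. By (3), $x_0$ is a limit of convex combinations of points $u$ with $\|u\|\le 1+\varepsilon$ and $\|y+u\|\ge 2-\varepsilon$. Since $\operatorname{Re}x^*$ is linear, some such $u$ has $\operatorname{Re}x^*(u)>1-2\varepsilon$. Choose a functional $z^*\in S_{X^*}$ norming $y+u$; then $\operatorname{Re}z^*(y)$ and $\operatorname{Re}z^*(u)$ are each at least roughly $1-2\varepsilon$. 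Consequently
\[
\|(\mathrm{Id}+T)u\|=\|u+x^*(u)y\|\ge \operatorname{Re}z^*(u)+\operatorname{Re}\bigl(x^*(u)z^*(y)\bigr)\approx 2 .
\]
Since $\|u\|\le 1+\varepsilon$, this gives $\|\mathrm{Id}+T\|\ge (2-O(\varepsilon))/(1+\varepsilon)$. Letting $\varepsilon\to 0$ yields $\|\mathrm{Id}+T\|=2$, which is (1).

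\emph{Main obstacle.} I expect the hardest step to be the quantitative bookkeeping in (1)$\Rightarrow$(2). There one has to pass from the norm of $\mathrm{Id}+T^*$, attained only approximately at $z^*$, to the two simultaneous estimates on $x^*(x)$ and $\|x+y\|$, including the rotation of $z^*$ in the complex case. The remaining steps are routine convexity and Hahn--Banach arguments.
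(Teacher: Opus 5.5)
Your proof is correct and is essentially the standard argument behind \cite[Lemma~2.2]{kadets2000banach}, which the paper quotes without proof; the paper's only indication, ``passing to adjoint operators'', is exactly your step (1)$\Rightarrow$(2). The one imprecision is the phrase ``by homogeneity'' in (3)$\Rightarrow$(1), since the Daugavet equation is not homogeneous; however, your estimate run for $\lambda\, x^*\otimes y$ with $\lambda>0$ gives $\|\mathrm{Id}+\lambda\, x^*\otimes y\|\ge 1+\lambda$ directly (in the limit $\varepsilon\to 0$), so nothing is lost.
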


A consequence of this characterization is that every slice of $B_X$, i.e., every set of the form
\[
\operatorname{Slice}(B_X,x^*,\varepsilon) := \{x \in B_X : x^*(x) > 1-\varepsilon\},
\]
where $x^*\in S_{X^*}$ and $\varepsilon>0$, has diameter $2$.

One of the oldest open questions concerning Daugavet spaces asks: if $X$ and $Y$ are Banach spaces with the Daugavet property, do the injective tensor product $X\hat\otimes_\varepsilon Y$ and the projective tensor product $X\hat\otimes_\pi Y$ also have the Daugavet property?
In \cite{WODP2020} the weak operator Daugavet property (WODP) was introduced, a strengthening of the Daugavet property characterized by the existence of certain operators. It was shown there that if $X$ and $Y$ have the WODP, then so does their projective tensor product. For the injective tensor product, it was shown in \cite[Theorem 3.5]{WODP2024} that if the duals $X^*$ and $Y^*$ have the WODP, then $X\hat\otimes_\varepsilon Y$ has the Daugavet property.

Regarding examples, it was observed in \cite[Proposition 5.11]{WODP2020} that $L^1(\mu)$ has the WODP for every $\sigma$-finite atomless measure $\mu$, and that $L^1$-preduals with the Daugavet property have the WODP \cite[Proposition 5.9]{WODP2020}; this class includes $C[0,1]$. We note that a straightforward partition-of-unity argument shows that $C(K,X)$ has the WODP whenever $X$ is a Banach space and $K$ is a compact Hausdorff space without isolated points. Another interesting class of spaces with the WODP consists of $L$-embedded Banach spaces with the metric approximation property and the Daugavet property \cite[Proposition 5.8]{BECERRAGUERRERO2027111632}. The stability of the WODP under (projective) tensor products lends particular significance to the question, raised in \cite{WODP2020}, of whether the Daugavet property and the weak operator Daugavet property in fact coincide. The main result of this paper is the construction of a Banach space with the Daugavet property which does not possess the WODP. After the study of the WODP, we will turn to a short study of poor subspaces, see Definition 3.1, showing that every separable Banach space contains a non-poor subspace isomorphic to $\ell^1$ and showing the necessity of separability, thus answering two questions raised in \cite{quotient}.

\section{Weak operator Daugavet property}

For a normed space $X$ with the Daugavet property, let $S := \operatorname{Slice}(B_X,y^*,\varepsilon)$, where $y^* \in S_{X^*}$, $\varepsilon > 0$, and $y \in S_X$. By the dual characterization of the Daugavet property, we can find $x^* \in S_{X^*}$ with $x^*(y) > 1-\varepsilon$ and $\|x^* + y^*\| > 2-\varepsilon$. Therefore there is some $x \in S$ with $x^*(x) > 1-\varepsilon$. The operator $T(z) := x^*(z)y$ has norm $1$ and
\[
\|Tx - y\| < \varepsilon, \qquad \|Ty - y\| < \varepsilon.
\]
The next definition, which was introduced in \cite{WODP2020}, is a strengthening of the Daugavet property by assuming the existence of a similar operator $T \colon X \to X$ such that $T$ fixes some arbitrary finite set of elements in $S_X$.

\begin{definition}
Let $X$ be a Banach space. We say that $X$ has the \emph{weak operator Daugavet property} (WODP for short) if, given 
$x_1, \dots, x_n \in S_X$, $\varepsilon > 0$, a slice $S$ of $B_X$, and $x' \in B_X$, 
we can find $x \in S$ and an operator $T : X \to X$ such that the following conditions hold:
\begin{enumerate}
    \item $\|T\| \leq 1 + \varepsilon$,
    \item $\|T(x_i) - x_i\| < \varepsilon$ for every $i \in \{1, \dots, n\}$,
    \item $\|T(x) - x'\| < \varepsilon$.
\end{enumerate}
\end{definition}

Looking at the proof that $L^1$-preduals with the Daugavet property have the WODP, we see that the Daugavet property of a Banach space $X$ implies that for a finite-dimensional $E \subset X$, $x' \in S_X$, $\varepsilon > 0$, and a slice $S$, we can find some $x \in S$ and $T \colon \operatorname{span}(E, \{x\}) \to X$ such that $T$ has norm less than $1+\varepsilon$,
\[
\|Te - e\| < \varepsilon \quad \text{for } e \in S_E, \qquad \text{and} \qquad \|Tx - x'\| < \varepsilon.
\]
We shall see that it is not always possible to extend such an operator to the whole of $X$ while almost preserving its norm. Before that, we show that the WODP implies an operator-independent property.
\begin{lemma}
Let $X$ be a Banach space with the WODP, and let $\delta, \varepsilon > 0$. Suppose $x_1, x_2 \in S_X$ are such that $B(x_1, \tfrac{1}{2} ) \cap B(x_2, \tfrac{1}{2}) \neq \varnothing$, and every
\[
z \in B\!\left(x_1,\tfrac{1}{2} + 3/2\varepsilon\right) \cap B\!\left(x_2, \tfrac{1}{2} + 3/2\varepsilon \right),
\]
satisfies $\|z + x_1\| > 2 - \delta$. Then, for every slice $S$ of $B_X$, there exists $s \in S$ such that
\[
(1+\varepsilon)\, \|s + u\| \; > \; 2 - \delta - \varepsilon,
\]
for every $u \in B(x_1, 1/2 ) \cap B(x_2, 1/2 )$.
\end{lemma}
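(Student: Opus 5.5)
The plan is to apply the WODP directly, with the target point $x'$ chosen to be $x_1$. The operator $T$ it produces will carry the whole set $B(x_1,\tfrac12)\cap B(x_2,\tfrac12)$ into the enlarged intersection where the hypothesis applies.

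First, fix a slice $S$ of $B_X$. Apply the definition of the WODP to the points $x_1,x_2\in S_X$, the given $\varepsilon>0$, the slice $S$, and $x':=x_1\in B_X$. This yields $s\in S$ and an operator $T\colon X\to X$ with
\[
\|T\|\le 1+\varepsilon,\qquad \|Tx_i-x_i\|<\varepsilon \ (i=1,2),\qquad \|Ts-x_1\|<\varepsilon .
\]

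Next, take any $u\in B(x_1,\tfrac12)\cap B(x_2,\tfrac12)$. For $i=1,2$ we estimate
\[
\|Tu-x_i\|\le \|T(u-x_i)\|+\|Tx_i-x_i\|<(1+\varepsilon)\tfrac12+\varepsilon=\tfrac12+\tfrac32\varepsilon .
\]
Hence $z:=Tu$ lies in $B(x_1,\tfrac12+\tfrac32\varepsilon)\cap B(x_2,\tfrac12+\tfrac32\varepsilon)$. This holds whether the balls are read as open or closed, because the inequality is strict. The hypothesis of the lemma then gives $\|Tu+x_1\|>2-\delta$.

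Finally, we transfer this estimate to $s+u$:
\[
(1+\varepsilon)\|s+u\|\ge \|T(s+u)\|\ge \|Tu+x_1\|-\|Ts-x_1\|>2-\delta-\varepsilon .
\]
The point $s$ was chosen before $u$, so the same $s$ works for every $u$ in the intersection, as the statement requires.

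There is no serious obstacle. The only delicate step is the bookkeeping in the second step, which checks that the distortion from $\|T\|\le1+\varepsilon$ and from the approximate fixing of $x_i$ adds up to exactly the enlargement $\tfrac32\varepsilon$ in the hypothesis. The assumption that the intersection $B(x_1,\tfrac12)\cap B(x_2,\tfrac12)$ is nonempty only ensures the conclusion is not vacuous; the argument itself does not use it.
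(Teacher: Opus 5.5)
Your proof is correct and is essentially the paper's argument: the paper also applies the WODP with $x'=x_1$, shows that $T$ maps $B(x_1,\tfrac12)\cap B(x_2,\tfrac12)$ into the enlarged intersection, and uses the same chain $\|Tu+x_1\|-\varepsilon\le\|Tu+Tx\|\le(1+\varepsilon)\|u+x\|$. The only difference is presentation: the paper assumes a bad $u(s)$ exists for every $s\in S$ and derives a contradiction, while you run the same estimate directly and uniformly in $u$.
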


\begin{proof}
Suppose there is some slice $S$ of $B_X$ such that
\[
(1+\varepsilon)\|s + u\| \leq 2 - \delta - \varepsilon \quad \text{for every $s \in S$ and some $u(s) \in B(x_1, 1/2 ) \cap B(x_2, 1/2)$}. \tag{$*$}
\]
According to Definition 2.1, we can choose some $T : X \to X$ and $x \in S$ with:
\begin{enumerate}
    \item $\|T\| \leq 1 + \varepsilon$,
    \item $\|T(x_i) - x_i\| < \varepsilon$ for every $i \in \{1,2\}$,
    \item $\|T(x) - x_1\| < \varepsilon$.
\end{enumerate}
For $z \in B(x_1, 1/2) \cap B(x_2, 1/2 )$, we estimate, for $i = 1,2$,
\begin{align*}
\|T(z) - x_i\|
&\le \|T(z) - T(x_i)\| + \|T(x_i) - x_i\| \\
&\le \|T\|\,\|z - x_i\| + \|T(x_i) - x_i\| \\
&< (1+\varepsilon)\cdot \tfrac{1}{2}  + \varepsilon \\
&= \tfrac{1}{2} + 3/2\varepsilon .
\end{align*}
Hence
\[
T\big(B(x_1,1/2) \cap B(x_2,1/2)\big) \subset B\!\left(x_1, \tfrac{1}{2} + 3/2\varepsilon \right) \cap B\!\left(x_2, \tfrac{1}{2} + 3/2\varepsilon\right).
\]

Therefore by assumption and $(*)$, we have for some $u(x) \in B(x_1, 1/2) \cap B(x_2, 1/2)$
\[
2- \delta - \varepsilon<\mathcal{k}Tu + x_1 \mathcal{k} - \varepsilon \leq \mathcal{k}Tu + Tx \mathcal{k} \leq (1+\varepsilon) \mathcal{k}u + x\mathcal{k} \leq 2 - \delta - \varepsilon.
\]
The contradiction concludes the proof.
\end{proof}

\begin{theorem}
There is a Banach space with the Daugavet property and without the WODP. 
\end{theorem}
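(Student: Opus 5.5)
We will construct the counterexample so that the contrapositive of Lemma 2.2 applies. Concretely, we want a Banach space $X$ with the Daugavet property, two points $x_1,x_2\in S_X$, and a slice $S$ of $B_X$ with the following features. First, $B(x_1,\tfrac12)\cap B(x_2,\tfrac12)\neq\varnothing$. Second, every $z$ in the slightly enlarged intersection $B(x_1,\tfrac12+\tfrac32\varepsilon)\cap B(x_2,\tfrac12+\tfrac32\varepsilon)$ satisfies $\|z+x_1\|>2-\delta$. Third, for each $s\in S$ there is some $u$ in the small intersection with $\|s+u\|$ bounded well away from $2$.

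The natural candidate is a space whose norm has a ``max'' or ``sum'' flavour, so that balls intersect in a rigid way. We propose a closed subspace $X\subset C(K)$ for a suitable compact $K$ without isolated points, or a space of the form $C[0,1]\oplus_\infty Y$, glued so that the Daugavet property survives. The idea is the following:
\begin{itemize}
\item Choose $x_1,x_2$ with $\|x_1-x_2\|=1$. Then the intersection of the two closed balls of radius $\tfrac12$ is the ``metric midpoint set''.
\item Arrange the norm so that every near-midpoint $z$ is forced to agree with $x_1$ on the coordinates where $x_1$ attains its norm; this gives $\|z+x_1\|\approx2$.
\item The midpoint set should nevertheless be rich enough in the remaining coordinates that, for each $s$ in a fixed slice, some midpoint $u$ is chosen ``opposite'' to $s$ there. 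Then $\|s+u\|$ stays close to $1$ or $3/2$.
\end{itemize}
For instance, in a sup-norm space, take $x_1=\mathbf 1_A-\mathbf 1_B$-type functions and $x_2$ differing from $x_1$ by $1$ on a set where both are small. The midpoints are then pinned where $x_1$ and $x_2$ coincide and are free elsewhere. The slice $S$ should be determined by a functional supported on the free region.

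In plain $C(K)$, however, the WODP holds, as noted in the Introduction. So the free region must interact with the slice in a way no norm-one operator can straighten out. This is why a genuinely new space is required, e.g.\ a subspace of $C(K)$ or of $L^1$ defined by linear constraints coupling the pinned and free regions.

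The plan therefore runs in three steps, in this order:
\begin{enumerate}
\item Define $X$, probably as a subspace of $C(K)$ (or an $\ell^\infty$-sum) cut out by infinitely many averaging constraints, and verify the Daugavet property via characterization (2) of Theorem 1.1. Given $y\in S_X$ and $x^*$, we produce peak-like functions in $X$ near points where $y$ nearly attains its norm, adjusted to satisfy the constraints.
\item Exhibit $x_1,x_2$ and check the two geometric hypotheses of Lemma 2.2 by direct coordinate computation.
\item Exhibit the slice $S$ and, for each $s\in S$, the bad midpoint $u(s)$ with $(1+\varepsilon)\|s+u\|\le 2-\delta-\varepsilon$. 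Lemma 2.2 then shows that $X$ fails the WODP.
\end{enumerate}

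The main obstacle is the tension between steps 1 and 3. The Daugavet property makes every slice ``diametral'', so points of $S$ tend to be close in sum-norm to almost everything. We must find bad $u$'s only inside the tiny midpoint set, which is what Lemma 2.2 exploits. We would first attempt to make the constraints force midpoints to carry a fixed ``sign pattern'' on the region seen by the slice functional. We would then verify the Daugavet property by showing that these constraints only restrict finitely many coordinates near any given point, so that local peak functions are still available.
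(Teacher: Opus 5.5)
\textbf{There is a genuine gap: the proposal is a plan, not a proof.} The space $X$, the points $x_1,x_2$, the slice $S$ and the bad midpoints $u(s)$ are never specified, and none of your three steps is carried out. The ``tension between steps 1 and 3'' that you leave open is exactly the content of the theorem.

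Several of your heuristics also point the wrong way.
\begin{itemize}
\item \emph{Pinned versus free coordinates.} In a sup-norm space with $\|x_1-x_2\|=1$, midpoints are pinned to $\tfrac12(x_1+x_2)$ where $|x_1-x_2|=1$. They are free, within a window of width $1$, where $x_1=x_2$. You have this the other way round.
\item \emph{Where the smallness of $\|s+u\|$ comes from.} At the coordinate where near-midpoints are forced to be $\approx x_1=1$ (the coordinate that produces $\|z+x_1\|\approx 2$), no choice of $u$ can make $|s+u|$ small. Only the slice can, by forcing $s\lesssim-\tfrac12$ there.
\item \emph{Midpoints ``opposite to $s$''.} These are not available: where $x_1=x_2=1$, every midpoint lies in $[\tfrac12,\tfrac32]$. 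They are also not needed.
\item \emph{Step 1.} The Daugavet property is not automatically inherited by subspaces of $C(K)$ cut out by infinitely many constraints, so step 1 cannot be waved through. In the paper, the whole inductive construction (property (1)) is there to secure it.
\end{itemize}

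\textbf{The missing idea, as the paper realizes it.} Take $f_1=\chi_{A\cup B}$ and $f_2=\chi_{B\cup C}$, so that near-midpoints are $\approx\tfrac12$ on $A\cup C$.
\begin{itemize}
\item The space is built so that this forces $u\ge 1-3\delta$ near the single point $\tfrac12\in B$ (property (2)). Hence $\|u+f_1\|\ge 2-3\delta$.
\item The slice is taken for $F=-(\mbox{value at }\tfrac12)$, so every $g$ in it satisfies $g<-\tfrac12+a$ near $\tfrac12$.
\item The exact midpoints $q_n$ equal $1$ only on an interval of length $1/(10n)$ around $\tfrac12$, and $\tfrac12$ elsewhere. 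This gives $\|g+q_n\|\le\tfrac32+a$, so the conclusion of Lemma 2.2 fails.
\end{itemize}

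\textbf{Your constrained-subspace idea can be completed with a single constraint.} Let $X=\{u\in C[0,1]: u(\tfrac12)=u(0)+u(1)\}$.
\begin{itemize}
\item $X$ has the Daugavet property. Given $y\in S_X$ and $x_0$ in a slice, pick a small interval $V$ avoiding $0,\tfrac12,1$ on which $|y|>1-\varepsilon$ and $|\nu|(V)$ is small, where $\nu$ is a Hahn--Banach extension of the slice functional. Push $x_0$ to $\pm1$ on $V$ by a tent-shaped correction; the result stays in $X$, in $B_X$ and in the slice, and is almost codirected with $y$.
\item Take $x_1,x_2\in S_X$ with $x_1(0)=1$, $x_1(1)=0$, $x_2(0)=0$, $x_2(1)=1$, $0\le x_i\le1$, and $x_1=x_2=1$ near $\tfrac12$. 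The constraint then gives $z(\tfrac12)\ge 1-3\varepsilon$ for every near-midpoint $z$, so $\|z+x_1\|\ge 2-3\varepsilon$.
\item For $s$ in the slice $\{g: -g(\tfrac12)>\tfrac12\}$, take $u=\tfrac12+\tfrac12\tau$, with $\tau$ a tent at $\tfrac12$ supported where $s<-\tfrac12$. This $u$ is an exact midpoint and satisfies $\|s+u\|\le\tfrac32$.
\end{itemize}
None of this appears in your proposal. As written, it establishes nothing beyond the already given Lemma 2.2.
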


\begin{proof}
Let $1/8 >\delta > 0$ be fixed. Denote $A := [0, 1/4]$, $B := [1/3, 2/3]$ and $C := [3/4,1]$. Denote by $Y$ the subspace of $L^\infty[A \cup B \cup C]$ consisting of all
(finite) linear combinations of characteristic functions of intervals
contained in $A \cup B \cup C$, which are, up to a set of Lebesgue measure
zero, locally constant at $1/2$, i.e., constant in a neighborhood of $1/2$ up to a set of Lebesgue measure zero.
Let \((\varepsilon_n)_{n \in \mathbb{N}}\) be a sequence with \(\varepsilon_n > 0\) for all \(n\) and \(\varepsilon_n \to 0\). We will inductively construct a sequence of finite-dimensional subspaces \((X_n)_{n \in \mathbb{N}}\) of \(Y\) satisfying \(X_n \subset X_{n+1}\) for all \(n\) such that for every \(g,f \in S_{X_n}\), there exists a finite family \(\{h_j\}_{j=1}^{m_n} \subset S_{X_{n+1}}\) such that
\[
\left\lVert \sum_{j=1}^{m_n} \lambda_j h_j - f \right\rVert < \varepsilon_n
\quad\text{and}\quad
\|h_j + g\| > 2 - \varepsilon_n
\quad \text{for every } j=1,\dots,m_n,
\tag{1}
\]
for some scalars \(\lambda_1,\dots,\lambda_{m_n} \geq 0\) satisfying
\[
\sum_{j=1}^{m_n} \lambda_j = 1.
\]

Let $f_1 := \chi_{A \cup B}$ and $f_2 := \chi_{B \cup C}$. We will construct the sequence in such a way that, for \(n \geq 2\), the subspace \(X_n\) contains a function
\[
q_n \in B(f_1,1/2) \cap B(f_2,1/2),
\]
such that $q_n$ is constantly equal to $1$ on the interval around $1/2$ with length $1/10n$, while \(q_n\) is equal to \(1/2\) on \(A \cup B \cup C \setminus  [1/2 -1/20n , 1/2 + 1/20n] \). Finally, we will require that, whenever \(u \in X_n\) satisfies
\[
u(A \cup C) \subset [1/2-\delta,1/2+ \delta],
\]
then
\[
u \geq 1-2 \delta \text{ in a neighborhood of $1/2$}. \tag{2}
\]
 We begin with \(X_1 := \operatorname{span}(f_1,f_2) \subset Y\). It is easy to see that all requirements are fulfilled for $X_1$. Indeed, for $(\lambda_1 f_1 + \lambda_2 f_2)(A \cup C) \subset [1/2-\delta,1/2+\delta]$, we have necessarily $\lambda_1, \lambda_2 \geq 1/2 - \delta$. Suppose now that we have constructed \(X_1,\dots,X_n \subset Y\).

Let $p_1,\dots,p_w$ be an $\varepsilon_n/2$-net of $S_{X_n}$. Since $X_n$ is finite-dimensional,
we can decompose
\[
A \cup B \cup C = \bigcup_{i=1}^{m} I_i^n,
\]
where the intervals $I_i^n$ intersect only at their endpoints, chosen so that
every element of $X_n$ is a linear combination of the characteristic functions
$\chi_{I_i^n}$.

Since every function in $X_n$ is locally constant at $1/2$, we may choose a
compact interval $J_n$ around $1/2$ with
\[
\mathcal{L}(J_n) < 4\,\mathcal{L}(I_i^n) \qquad \text{for every } i,
\]
where $\mathcal{L}$ denotes the one-dimensional Lebesgue measure, so that
\[
\big\| f|_{(A\cup B\cup C)\setminus J_n} \big\|_\infty = 1
\qquad \text{for every } f \in S_{X_n}.
\]

We can choose $\left\lfloor 4/\varepsilon_n \right\rfloor + 1$ many compact, pairwise disjoint intervals $\{Q^i_j\}_j$ and again $\left\lfloor 4/\varepsilon_n \right\rfloor + 1$ many compact, pairwise disjoint intervals $\{W^i_j\}_j$, with $\bigcup_j Q^i_j$ and $\bigcup_j W^i_j$ disjoint, such that $Q^i_j$ and $W^i_j$ are disjoint from $J_n$ and contained in the right half of $I^n_i$ for every $j$. We can then choose scalars $\lambda'_1, \dots, \lambda'_m$ and $\mu'_1, \dots, \mu'_m$, each with $|\lambda'_i|, |\mu'_i| \leq 2$, such that
\[
p_1 + \sum_{i=1}^{m} \lambda'_i \, \chi_{Q^i_j} + \sum_{i=1}^{m} \mu'_i \, \chi_{W^i_j} \in S_Y \qquad \text{for every } j,
\]
while
\[
\left\| p_1 + \sum_{i=1}^{m} \lambda'_i \, \chi_{Q^i_j} + \sum_{i=1}^{m} \mu'_i \, \chi_{W^i_j} + p_s \right\| = 2 \qquad \text{for every } s \text{ and } j.
\]
Indeed, since every element in $X_n$ is constant on every interval $I^n_i$ — in particular $p_1 \in X_n$ — we may choose $\lambda'_i$ in such a way that $p_1 + \lambda'_i \chi_{Q^i_j}$ is equal to $1$ on $Q^i_j$, while we can choose $\mu'_i$ in such a way that $p_1 + \mu'_i \chi_{W^i_j}$ is equal to $-1$ on $W^i_j$. Since all the intervals $Q^i_j$ and $W^i_j$ are pairwise disjoint, we get that
$p_1 + \sum_{i=1}^{m} \lambda'_i \, \chi_{Q^i_j} + \sum_{i=1}^{m} \mu'_i \, \chi_{W^i_j}$ attains the values $1$ and $-1$ on a set of positive measure on every interval $I^n_i$. This implies that the sum is even codirected with every element in $X_n$, i.e., 
\[
\mathcal{k}p_1 + \sum_{i=1}^{m} \lambda'_i \, \chi_{Q^i_j} + \sum_{i=1}^{m} \mu'_i \, \chi_{W^i_j} + h \mathcal{k} = 2,
\]
for every element $h \in S_{X_n}$, since they are constant on each $I^n_i$.
It is easy to see that
\[
\left\|
 p_1 \;-\; \frac{1}{\left\lfloor 4/\varepsilon_n \right\rfloor + 1}
\sum_{j=1}^{\left\lfloor 4/\varepsilon_n \right\rfloor + 1}
\left(
p_1 + \sum_{i=1}^{m} \lambda'_i \, \chi_{Q^i_j} + \sum_{i=1}^{m} \mu'_i \, \chi_{W^i_j}
\right)
\right\|
\;<\; \frac{\varepsilon_n}{2},
\]
since the sets in $\{Q^i_j\} \cup \{W^i_j\}$ are pairwise disjoint. By construction, (2) is preserved for $\operatorname{span}\big(X_n, \{\chi_{Q^i_j}\}_{i,j}, \{\chi_{W^i_j}\}_{i,j}\big) \subset Y$: let $s^i_j, t^i_j \in \mathbb{R}$ be arbitrary scalars. For $f \in X_n$ with
\[
f(A \cup C) \not\subset \left[\tfrac{1}{2} - \delta,\ \tfrac{1}{2} + \delta\right],
\]
we still have
\[
\left(f + \sum_{i,j} s^i_j \chi_{Q^i_j} + \sum_{i,j} t^i_j \chi_{W^i_j}\right)(A \cup C) \not\subset \left[\tfrac{1}{2} - \delta,\ \tfrac{1}{2} + \delta\right].
\]
Indeed, note first that, since $f$ is constant on every $I^n_i$, there is some interval $I^n_q \subset A \cup C$ with $f(I^n_q)$ disjoint from $[1/2-\delta,1/2+\delta]$. Since $\sum_{i,j} s^i_j \chi_{Q^i_j} + \sum_{i,j} t^i_j \chi_{W^i_j}$ is supported only on the right half of each $I^n_i$, we see that
\[
\left(f + \sum_{i,j} s^i_j \chi_{Q^i_j} + \sum_{i,j} t^i_j \chi_{W^i_j}\right)(x) \notin [1/2-\delta,1/2 + \delta],
\]
for all $x$ on the left half of $I^n_q$. For $f$ with
\[
f(A \cup C) \subset \left[\tfrac{1}{2} - \delta,\ \tfrac{1}{2} + \delta\right],
\]
we have either
\[
\left(f + \sum_{i,j} s^i_j \chi_{Q^i_j} + \sum_{i,j} t^i_j \chi_{W^i_j}\right)(A \cup C) \not\subset \left[\tfrac{1}{2} - \delta,\ \tfrac{1}{2} + \delta\right],
\]
in which case there is nothing to show, or
\[
\left(f + \sum_{i,j} s^i_j \chi_{Q^i_j} + \sum_{i,j} t^i_j \chi_{W^i_j}\right)(A \cup C) \subset \left[\tfrac{1}{2} - \delta,\ \tfrac{1}{2} + \delta\right],
\]
in which case we still have
\[
\left(f + \sum_{i,j} s^i_j \chi_{Q^i_j} + \sum_{i,j} t^i_j \chi_{W^i_j}\right) \geq 1 - 2\delta
\]
in a neighborhood of $1/2$, since $\chi_{Q^i_j}$ and $\chi_{W^i_j}$ have support disjoint from $J_n$.

We repeat the same argument for
$\operatorname{span}\big(X_n, \{\chi_{Q^i_j}\}_{i,j}, \{\chi_{W^i_j}\}_{i,j}\big)$ and $p_2$, and
continue inductively until $p_w$, to obtain a subspace $X_n'$ of $Y$ such
that (1) and (2) are fulfilled. Indeed, (2) is fulfilled for $X_n'$ by the previous argument, and to see that (1) still holds, we take $v, e \in S_{X_n}$ and denote by $\chi_{Q^{i,k}_j}$ and $\chi_{W^{i,k}_j}$ the characteristic functions obtained in the $k$-th step, where $\chi_{Q^{i,1}_j} = \chi_{Q^i_j}$ and $\chi_{W^{i,1}_j} = \chi_{W^i_j}$. Let $p_r \in \{p_1, \dots, p_w\}$ be such that $\|p_r - v\| < \varepsilon_n/2$. By the previous argument, we have scalars $\lambda_i'', \mu_i'' \in \mathbb{R}$ such that
\[
\left\|
p_r - \frac{1}{\left\lfloor 4/\varepsilon_n \right\rfloor + 1}
\sum_{j=1}^{\left\lfloor 4/\varepsilon_n \right\rfloor + 1}
\left(
p_r + \sum_{i=1}^{m_r} \lambda_i'' \, \chi_{Q^{i,r}_j} + \sum_{i=1}^{m_r} \mu_i'' \, \chi_{W^{i,r}_j}
\right)
\right\|
< \frac{\varepsilon_n}{2},
\]
while the sum is codirected to $e$ and equal to one. By the triangle inequality, we get
\[
\left\|
v - \frac{1}{\left\lfloor 4/\varepsilon_n \right\rfloor + 1}
\sum_{j=1}^{\left\lfloor 4/\varepsilon_n \right\rfloor + 1}
\left(
p_r + \sum_{i=1}^{m_r} \lambda_i'' \, \chi_{Q^{i,r}_j} + \sum_{i=1}^{m_r} \mu_i'' \, \chi_{W^{i,r}_j}
\right)
\right\|
< \varepsilon_n.
\]
Therefore $(1)$ will hold for $(X_n,X_{n+1})$, where $X_{n+1}$ will be defined shortly as some enlargement of $X_n'$. Now we define
\[
q_{n+1} := \frac12 \chi_{A \cup C} + \frac{1}{2}\,\chi_{B \setminus \left[\frac12-\frac{1}{20(n+1)},\, \frac12+\frac{1}{20(n+1)}\right]}
+ \chi_{\left[\frac12-\frac{1}{20(n+1)},\, \frac12+\frac{1}{20(n+1)}\right]} \in B(f_1,1/2) \cap B(f_2,1/2).
\]
We need to show that $X_{n+1} := \operatorname{span}(X_n', q_{n+1}) \subset Y$ still fulfills (2). Let $f_n \in X_n'$ and $\lambda \in \mathbb{R}$. Either we have $(f_n + \lambda q_{n+1})(A \cup C) \not\subset \left[\tfrac12-\delta,\ \tfrac12+\delta\right]$, in which case we have nothing to show, or $(f_n + \lambda q_{n+1})(A \cup C) \subset \left[\tfrac12-\delta,\ \tfrac12+\delta\right]$. Since $q_{n+1} \equiv 1/2 \equiv 1/2(\chi_{A \cup B} + \chi_{B \cup C})$ on $A \cup C$, we have $(f_n + \lambda \tfrac12(\chi_{A \cup B} + \chi_{B \cup C}))(A \cup C) \subset \left[\tfrac12-\delta,\ \tfrac12+\delta\right]$, and therefore, since (2) holds for $\operatorname{span}(\chi_{A \cup B} + \chi_{B \cup C}, f_n) \subset X_n'$, we get $f_n + \lambda \tfrac12(\chi_{A \cup B} + \chi_{B \cup C}) \geq 1-2\delta$ in a neighborhood of $1/2$. Therefore, since $q_{n+1} \equiv 1 = \tfrac12(\chi_{A \cup B} + \chi_{B \cup C})$ in a neighborhood of $1/2$, we get
\[
f_n + \lambda q_{n+1} \geq 1- 2\delta \quad \text{in a neighborhood of } 1/2,
\]
hence $(2)$ is fulfilled. The induction is therefore completed. $X:= \overline{\bigcup_{n=1}^\infty X_n} \subset Y$ has the Daugavet property. Indeed, it is a standard argument to show that the Daugavet property goes from a normed space to its completion, and the Daugavet property for $\bigcup_{n=1}^\infty X_n$ follows from (1) and the fact that any two elements in this union both lie in some $X_d$.
We can also see that it almost fulfills (2): For $u \in X$ with $u(A \cup C) \subset [1/2-\delta,1/2+\delta]$ and $\varepsilon > 0$, choose $u_\varepsilon \in X_{n_\varepsilon}$ with $\|u - u_\varepsilon\| < \varepsilon$. Define
\[
w_\varepsilon := \frac12\big(\chi_{A \cup B} + \chi_{B \cup C}\big) + \frac{\delta}{\delta+\varepsilon}\left(u_\varepsilon - \frac12\big(\chi_{A \cup B} + \chi_{B \cup C}\big)\right) \in X_{n_\varepsilon}.
\]
Since $\chi_{A \cup B} + \chi_{B \cup C} \equiv 1$ on $A \cup C$, we have $\tfrac12(\chi_{A\cup B}+\chi_{B\cup C}) \equiv 1/2$ on $A \cup C$. As $\|u-u_\varepsilon\|<\varepsilon$, we get $u_\varepsilon(A\cup C) \subset [1/2-\delta-\varepsilon,1/2+\delta+\varepsilon]$, and therefore
\[
w_\varepsilon(A \cup C) - \tfrac12 = \frac{\delta}{\delta+\varepsilon}\Big(u_\varepsilon(A\cup C) - \tfrac12\Big) \subset [-\delta,\delta],
\]
i.e., $w_\varepsilon(A \cup C) \subset [1/2-\delta,1/2+\delta]$. Since $w_\varepsilon \in X_{n_\varepsilon}$, (2) applied to $X_{n_\varepsilon}$ gives
\[
w_\varepsilon \geq 1-2\delta \quad \text{in a neighborhood of } 1/2.
\]
Obviously $w_\varepsilon \to u$, and since $w_\varepsilon$ is locally constant at $1/2$ for every $\varepsilon > 0$, we get, for $\varepsilon' > 0$ small enough that $\|w_{\varepsilon'} - u\| < \delta$, that $u \geq 1 - 3\delta$ in a neighborhood of $1/2$.
The set of all $u \in X$ such that $u(A \cup C) \subset [1/2-\delta,1/2+\delta]$ obviously contains $X \cap B(f_1,1/2 + \delta) \cap B(f_2,1/2 + \delta)$. Indeed, for $A$, we know that $\lvert f_1 - u \rvert \leq 1/2 + \delta$ implies $u \geq 1/2- \delta$, since $f_1 \equiv 1$ on $A$. Since $f_2 \equiv 0$ on $A$, we also have $\lvert 0 - u \rvert \leq 1/2 + \delta$, i.e., $u \leq 1/2 + \delta$ on $A$. The same argument obviously applies to $C$. Hence, for $v \in X \cap B(f_1,1/2 + \delta) \cap B(f_2,1/2 + \delta)$, we have $\|v + f_1\| \geq 2 - 3\delta$ by the last argument. Define $F \in S_{X^*}$ by
\[
F(f) := -\lim_{n \rightarrow \infty} \frac{n}{2}\int_{[1/2 -1/n,\, 1/2 + 1/n]} f\, dx.
\]
This is well-defined, since $f \in X$ can be approximated arbitrary well by functions in $\bigcup_{n = 1}^\infty X_n$ which are locally constant at $1/2$.
Let $g \in B_X$ with $F(g) > 1/2$ and $a > 0$. For $g_a \in \bigcup_{n = 1}^\infty X_n$ with $\mathcal{k}g_a - g \mathcal{k} < a$, we get $g_a < -1/2 + a$ in a neighborhood of $1/2$, and therefore $\|g + q_{n_a}\| \leq 1 + 1/2 + a$ for $n_a$ large enough. Therefore, the condition of Lemma 2.2 is not fulfilled for $\delta$ and $a$ small enough and hence $X$ does not have the WODP.
\end{proof}

The next theorem is a slight extension of Proposition~5.11 in~\cite{WODP2020}. The proof is a straightforward adaptation of the argument given there.

\begin{theorem}
Let $\varepsilon > 0$. 
Suppose $W \subset L[0,1]$ is relatively weakly compact, 
$\{x_i\}_{i=1}^{\infty} \subset S_{L[0,1]}$, 
and $S_i = S(B_{L^1[0,1]},g_i,\alpha_i)$ are slices of $B_{L[0,1]}$ with
$g_i \in S_{L^\infty[0,1]}$ and $\alpha_i > 0$. Then there exist an operator $T \in L(L[0,1])$ and a sequence $s_i \in S_i$ such that
\begin{enumerate}
    \item $\mathcal{k}T\mathcal{k} \le 1$,
    \item $\mathcal{k}Tw - w\mathcal{k} < \varepsilon \quad \text{for all } w \in W$,
    \item $Ts_i = x_i$ for all $i \in \mathbb{N}$.
\end{enumerate}
\end{theorem}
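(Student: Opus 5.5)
We imitate the proof of \cite[Proposition~5.11]{WODP2020}: build $T$ as a conditional-expectation-type operator with respect to a finite partition of $[0,1]$, corrected on countably many small disjoint sets on which it maps pieces of the slices onto the prescribed $x_i$. (Here $L[0,1]$ means $L^1[0,1]$.)

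\textbf{Step 1: uniform integrability.} Since $W$ is relatively weakly compact, the Dunford--Pettis theorem gives uniform integrability of $W$. Thus there is $\eta>0$ with $\int_E |w| < \varepsilon/4$ for all $w\in W$ whenever $\mathcal{L}(E)<\eta$. Choose pairwise disjoint measurable sets $E_1,E_2,\dots$ with $\sum_i \mathcal{L}(E_i) < \eta$. Put $D:=[0,1]\setminus\bigcup_i E_i$.

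\textbf{Step 2: the slices.} For each $i$ we need $s_i\in S_i$ supported in $E_i$. We take $s_i = \pm\chi_{F_i}/\mathcal{L}(F_i)$ for a subset $F_i\subset E_i$ of positive measure on which $\pm g_i > 1-\alpha_i$. Such an $F_i$ exists because $\|g_i\|_\infty=1$ gives positive-measure sets $\{|g_i|>1-\alpha_i\}$. However, these sets need not meet a prescribed $E_i$.

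To fix this, we choose the $E_i$ inductively. At stage $i$, the set $G_i:=\{|g_i|>1-\alpha_i\}\setminus\bigcup_{k<i}E_k$ still has positive measure, provided the previously chosen sets are of very small measure and not just avoiding $G_i$. Arranging this is the main obstacle, since all the $G_i$ must be handled at once. We resolve it by choosing $\mathcal{L}(E_k)$ small relative to $\mathcal{L}(\{|g_j|>1-\alpha_j\})$ for all $j\le k$. Concretely, we pick $E_i\subset G_i$ with $\mathcal{L}(E_i)<\min\{\eta 2^{-i},\ \tfrac12\mathcal{L}(G_{j}')2^{-i}: j\le i+1\}$. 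This is done by choosing $E_i$ as a subset of the part of $G_i$ where the measures are still controlled, so that each later $G_j$ loses at most half its measure. We then set $F_i:=E_i$ and choose the sign so that $s_i\in S_i$.

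\textbf{Step 3: definition of $T$.} Define
\[
Tf := f\chi_D + \sum_{i=1}^\infty \Big(\int_{E_i} \sigma_i f\Big)\, x_i,
\]
where $\sigma_i=\pm1$ is the sign used for $s_i$.

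\emph{Norm.} We have $\|Tf\|\le \int_D|f| + \sum_i \int_{E_i}|f|\,\|x_i\| = \|f\|$, so $\|T\|\le 1$ and the series converges absolutely.

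\emph{Action on the slices.} Since $s_i$ is supported on $E_i$ and disjoint from the other $E_k$ and from $D$, we get $Ts_i = \big(\sigma_i\int_{E_i}s_i\big)x_i = x_i$, because $\sigma_i s_i = \chi_{E_i}/\mathcal{L}(E_i)$.

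\emph{Action on $W$.} Set $E:=\bigcup_i E_i$. Then
\[
\|Tw - w\| \le \int_E|w| + \sum_i\int_{E_i}|w| = 2\int_E |w| < \varepsilon/2 < \varepsilon
\]
for $w\in W$, using $\mathcal{L}(E)<\eta$ from Step 1.

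All conditions of the theorem hold. The only delicate point is the simultaneous choice of the disjoint $E_i$ inside the level sets $\{|g_i|>1-\alpha_i\}$ in Step 2. If that bookkeeping fails, a simpler route is available: replace each level set by a density-point neighbourhood and take $E_i$ to be a tiny portion of it, chosen by a diagonal measure-halving argument.
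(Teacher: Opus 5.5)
Your Steps 1 and 3 match the paper's argument: Dunford--Pettis, $Tf=f\chi_D+\sum_i\bigl(\int_{E_i}\sigma_i f\bigr)x_i$, and the estimate $\|Tw-w\|\le 2\int_E|w|$. However, Step 2, the only nontrivial point, does not work as written. You remove earlier sets from later level sets. So to keep $G_j=G_j'\setminus\bigcup_{k<j}E_k$ non-null, each of $E_1,\dots,E_{j-1}$ must be small relative to $G_j'$ for \emph{every} later $j$. Reading $G_j'$ as $\{|g_j|>1-\alpha_j\}$, your bound only compares $E_i$ with $G_j'$ for $j\le i+1$. A bound on $\mathcal{L}(E_1)$ alone cannot protect all later level sets once their measures tend to $0$.

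Concretely, take $g_j(x)=1-x$ and $\alpha_j=2^{-j}$, so $G_j'=[0,2^{-j})$. Then $E_1=[0,c)$ with $0<c<\min\{\eta/2,1/16\}$ satisfies your constraint. Yet $G_j'\subset E_1$ for every $j$ with $2^{-j}\le c$, so $G_j$ is null and $s_j$ cannot be built. The claim that ``each later $G_j$ loses at most half its measure'' is therefore unjustified. The density-point fallback is not an argument either: in this example all level sets share the density point $0$.

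The missing idea is to disjointify in the opposite direction, as the paper does. Choose $A_i\subset\{|g_i|>1-\alpha_i\}$ one after another with $0<\mathcal{L}(A_i)<\eta 2^{-i}$ and $\mathcal{L}(A_{i+1})<\tfrac12\mathcal{L}(A_i)$, and put $E_i:=A_i\setminus\bigcup_{j>i}A_j$. Each new set now only has to be small relative to the finitely many sets already chosen. This gives $\mathcal{L}(E_i)\ge\mathcal{L}(A_i)-\sum_{j>i}\mathcal{L}(A_j)>0$. The $E_i$ are pairwise disjoint, and their total measure is $<\eta$.

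There is also a smaller issue. With $E_i\subset\{|g_i|>1-\alpha_i\}$, a constant sign $\sigma_i=\pm1$ need not give $s_i\in S_i$, because $g_i$ may change sign on $E_i$. Either take $\sigma_i:=\operatorname{sign}(g_i)$ as a function, as in the paper, or work inside one of the sets $\{\pm g_i>1-\alpha_i\}$. With these changes the rest of your proof goes through unchanged.
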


\begin{proof}

By the Dunford--Pettis characterization of relatively weakly compact
subsets of $L^1[0,1]$, there exists $\delta > 0$ such that for every
measurable set $B \subset [0,1]$ with $\mathcal{L}(B) < \delta$, we have
\[
  \int_B |w| \, dx < \frac{\varepsilon}{3}
  \qquad \text{for all } w \in W. \tag{$1'$}
\]

For $0 < \beta_i < \alpha_i$, choose measurable sets $A_i \subset [0,1]$
such that
\[
  A_i \subset \bigl\{ x \in [0,1] : |g_i(x)| \ge \|g_i\| - \beta_i \bigr\}
  \qquad \text{and} \qquad
  0 < \mathcal{L}(A_i) < \frac{\delta}{2^{2i}},
\]
and such that, for every $i \in \mathbb{N}$, the set
\[
  B_i := A_i \setminus \bigcup_{j=i+1}^{\infty} A_j
\]
has positive Lebesgue measure. Obviously, the sets $B_i$ are pairwise disjoint, and
\[
    \mathcal{L}\Big( \bigcup_{i=1}^\infty B_i \Big) < \delta.
\]
Define
\[
    s_i(x) := \operatorname{sign}(g_i(x)) \, \frac{\chi_{B_i}(x)}{\mathcal{L}(B_i)}.
\]
It follows that $s_i \in S_{L^1[0,1]} \cap S_i$, since
\[
    \int_{[0,1]} s_i g_i \, dx \geq \|g_i\| - \beta_i > 1 - \alpha_i.
\]

Denote $C := \bigcup_{i =1}^\infty B_i$. Choose $h_i \in S_{L^{\infty}([0,1])}$ such that
\[
    \int_{[0,1]} h_i s_i \, dx = 1
    \qquad \text{and} \qquad
    \operatorname{supp} h_i \subset B_i. \tag{2'}
\]
We can now define $T \in L(L^1[0,1])$ by
\[
    T(f) := f \chi_{[0,1] \setminus C}+ \sum_{i=1}^\infty \left( \int_{[0,1]} h_i f \, dx \right) x_i.
\]
The condition $(1')$ and $\operatorname{supp} h_i \subset B_i$ imply $\|T\| \leq 1$, and

\[
\|Tw - w\| \leq \|w\chi_{[0,1] \setminus C} - w\| + \sum_{i=1} \|w\chi_{B_i}\|\|x_i\| \leq 2\|w\chi_C\| \leq 2\varepsilon/3  < \varepsilon
\]
for $w \in W$. By $(2')$, we also see that $ Ts_i = x_i$ for every $i \in \mathbb{N}$.

\end{proof}

We isolate this property enjoyed by $L^1[0,1]$ in a new definition.

\begin{definition}
We say that a Banach space $X$ has the \emph{WCDOP} if, for every
relatively weakly compact set $W \subset X$, every $\varepsilon > 0$,
every slice $S$ of $B_X$, and every $x' \in B_X$, there exist $x \in S$
and an operator $T : X \to X$ such that
\begin{enumerate}
    \item $\|T\| \leq 1 + \varepsilon$,
    \item $\|T(w) - w\| < \varepsilon$ for every $w \in W$,
    \item $\|T(x) - x'\| < \varepsilon$.
\end{enumerate}
\end{definition}

The next theorem shows that the WCDOP and WODP do not coincide.
\begin{theorem}
$C[0,1]$ does not have the WCDOP.
\end{theorem}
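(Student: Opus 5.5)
The plan is to exploit the fact that $C[0,1]$ contains weakly null sequences of norm-one ``spikes'' accumulating at a point. Such a sequence can be included in $W$, and it lets us read off point evaluations near that point almost isometrically. We then choose the slice and the target $x'$ so that condition (3) of the WCDOP definition forces $T$ to produce an element of norm close to $2$ from an element of norm at most $1$, which is impossible once $\|T\|\le 1+\varepsilon$ with $\varepsilon$ small.

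\textbf{Choice of data.} Fix $t_0=1/2$ and a sequence $t_n\to t_0$ with $t_n\neq t_0$. Choose continuous $f_n\colon[0,1]\to[0,1]$ with $f_n(t_n)=1$, pairwise disjoint supports, and $\operatorname{supp} f_n\subset [t_n-r_n,t_n+r_n]$ for some $r_n\to 0$. Set $W:=\{f_n\}_{n\in\mathbb{N}}\cup\{0\}$. The sequence $(f_n)$ is bounded and converges pointwise to $0$, so by dominated convergence $\int f_n\,d\mu\to 0$ for every regular Borel measure $\mu$. Hence $f_n\to 0$ weakly, and $W$ is (relatively) weakly compact. Take the slice $S:=\{x\in B_{C[0,1]} : x(t_0)>1-\alpha\}$, defined by the norm-one functional $\delta_{t_0}$ with a small $\alpha>0$ (say $\alpha=1/10$), and take $x':=-\mathbf 1$, the constant function $-1$. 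Fix $\varepsilon>0$ small (say $\varepsilon<1/10$).

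\textbf{Deriving the contradiction.} Suppose $x\in S$ and $T$ satisfy conditions (1)--(3). Since $x$ is continuous and $x(t_0)>1-\alpha$, there is a neighbourhood $U$ of $t_0$ on which $x>1-\alpha$. Because $t_n\to t_0$ and $r_n\to 0$, we may pick $n$ with $\operatorname{supp} f_n\subset U$.
\begin{itemize}
\item On $\operatorname{supp} f_n$ we have $x\in(1-\alpha,1]$ and $f_n\in[0,1]$, so $x-f_n\in(-\alpha,1]$ there. Off the support, $x-f_n=x$. Hence $\|x-f_n\|\le 1$.
\item By conditions (2) and (3), $\|T(x-f_n)-(-\mathbf 1-f_n)\|<2\varepsilon$.
\item Evaluating at $t_n$ gives $\|T(x-f_n)\|\ge 2-2\varepsilon$.
\item On the other hand, condition (1) gives $\|T(x-f_n)\|\le 1+\varepsilon$.
\end{itemize}
These are incompatible for $\varepsilon<1/3$, so $C[0,1]$ fails the WCDOP.

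\textbf{Main point to check.} The delicate step is that $W$ must be fixed before $x$ is chosen, while the spike we use must sit inside the neighbourhood $U$, which depends on $x$. This is handled by letting the spikes accumulate at $t_0$, the very point defining the slice: every $x\in S$ exceeds $1-\alpha$ on some neighbourhood of $t_0$, and that neighbourhood eventually contains the supports of the $f_n$. The only other ingredient is the weak nullity of $(f_n)$, which follows from dominated convergence against measures via the Riesz representation theorem. This also explains why the argument does not contradict the WODP of $C[0,1]$: a finite set $\{x_1,\dots,x_n\}$ cannot anticipate where the neighbourhood $U$ lies, whereas the infinite weakly compact set $W$ can.
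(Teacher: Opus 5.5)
Your proof is correct and is essentially the paper's argument: weakly null, positive, norm-one spikes accumulating at the point $t_0$ that defines the slice $\operatorname{Slice}(B_{C[0,1]},\delta_{t_0},1/10)$, the target $x'=-\mathbf{1}$, and the estimate $\|T(x-f_n)\|\approx\|{-\mathbf{1}}-f_n\|=2$ played against $\|T\|\le 1+\varepsilon$. The only difference is the order of the steps: the paper, with $t_0=0$, first shows $\|w_n-f\|\ge (2-2/10)/(1+1/10)$ for every $n$ and then uses continuity at $0$, whereas you use continuity first to find a spike with $\|x-f_n\|\le 1$ and then derive the contradiction.
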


\begin{proof}
Choose a sequence $w_n \in S_{C[0,1]}$ with $\operatorname{supp} w_n \subset (1/(n+1), 1/n)$ and $w_n \geq 0$. Since $(w_n)$ is equivalent to the $c_0$-basis, we conclude that $\{w_n\}_{n=1}^\infty$ converges weakly to $0$ and is therefore relatively weakly compact.

Now choose $S := \operatorname{Slice}(B_{C[0,1]}, \delta_0, 1/10)$, $f' \equiv -1$, and $\varepsilon = 1/10$. Suppose, for contradiction, that $C[0,1]$ has the WCDOP. Then there exist an operator $T \in L(C[0,1])$ with $\|T\| \leq 1 + 1/10$ and $\|Tw_n - w_n\| < 1/10$ for all $n$, and an element $f \in S$ such that
\[
    \frac{1}{10} > \|Tf - f'\| = \|Tf + 1\|.
\]
Therefore
\[
    \|w_n - f\| \geq \frac{1}{1 + 1/10} \|Tw_n - Tf\|
    \geq \frac{1}{1 + 1/10} \left( -\frac{2}{10} + \|w_n + 1\| \right)
    = \frac{2 - 2/10}{1 + 1/10},
\]
where the second inequality holds because
\[
    \|w_n + 1\| \leq \|w_n - Tw_n\| + \|Tw_n - Tf\| + \|Tf + 1\|.
\]
We conclude that, for every $n \in \mathbb{N}$, there is a point $x_n \in (1/(n+1), 1/n)$ such that $f(x_n) \leq -1/2$. Since $f$ is continuous, it follows that $f(0) \leq -1/2$, which contradicts $f \in S$.
\end{proof}

\section{Poor subspaces}

Independently of the last section, we turn to a short study of poor subspaces, answering two questions posed in \cite{quotient}.
The central tool for this investigation is Theorem~3.2, which is based on the theory, introduced in \cite{narrowrich}, of operators between Banach spaces $T \colon X \rightarrow Y$ that are narrow with respect to a pair $(G, \Gamma) \subset L(X,Y) \times \mathcal{P}(S_{X^*})$, i.e., for every $x \in S_X$, $y \in S_Y$, every $x^* \in \Gamma$, and every $\varepsilon > 0$, there exists $z \in S_X$ such that
\[
\|Gz + y\| > 2 - \varepsilon \quad \text{and} \quad \|T(x - z)\| + |x^*(x - z)| < \varepsilon.
\]
In the case of $G = \operatorname{Id}$ we say that $T$ is narrow with respect to $\Gamma$, and if also $\Gamma = S_{X^*}$ then we say that $T$ is narrow. For a Banach space $X$ with the Daugavet property we say that a closed subspace $Y \subset X$ is rich with respect to $\Gamma \subset S_{X^*}$ if the quotient map $q : X \rightarrow X/Y$ is narrow with respect to $\Gamma$, and rich if $\Gamma = S_{X^*}$. It was shown in~\cite[Theorem~5.12]{narrowrich} that a closed subspace $Y \subset X$ is rich if $Y \subset \tilde{Y} \subset X$ implies that $\tilde{Y}$ has the Daugavet property.

The following notion, which dualises the concept of rich subspaces, was introduced in \cite{quotient}.

\begin{definition}
Let $X \in \mathrm{DPr}$.  
A closed subspace $Z \subset X$ is said to be \emph{poor} if
\[
X / \tilde{Z} \in \mathrm{DPr}
\quad \text{for every closed subspace } \tilde{Z} \subset Z.
\]
\end{definition}

It turns out that a closed subspace of $Z \subset X$ is poor if the quotient map from $X^*$ to $X^*/Z^\perp$ is narrow with respect to $S_X$.

\begin{theorem}\cite[Theorem~5.8]{quotient} Let $X$ be a Banach space enjoying the Daugavet property. For a closed subspace $Z \subset X$, the following conditions are equivalent:
\begin{enumerate}
    \item $Z$ is poor.
    \item $X / \tilde{Z} \in \mathrm{DPr}$ for every closed subspace $\tilde{Z} \subset Z$ with
    $\mathrm{codim}_Z(\tilde{Z}) \le 2$.
    \item $Z^{\perp}$ is a subspace of $X^*$ that is rich with respect to $S_X$.
    \item For every $x^*, e^* \in S_{X^*}$, $\varepsilon > 0$, and for every $x \in S_X$ such that $e^*(x) > 1 - \varepsilon$,  
    there exists an element $v^* \in B_{X^*}$ satisfying
    \[
    v^*(x) > 1 - \varepsilon, \quad 
    \|x^* + v^*\| > 2 - \varepsilon, \quad 
    \|(e^* - v^*)|_Z\| < \varepsilon,
    \]
    that is, the quotient map from $X^*$ onto $X^* / Z^{\perp}$ is narrow with respect to $S_X$.
\end{enumerate}
\end{theorem}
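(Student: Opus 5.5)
The cycle I would prove is $(1)\Rightarrow(2)\Rightarrow(4)\Rightarrow(1)$, together with $(3)\Leftrightarrow(4)$. Throughout I use the dual (weak$^*$) form of the characterization in Theorem 1.1: a normed space $E$ has the Daugavet property iff for every $y^*\in S_{E^*}$, $e\in S_E$ and $\varepsilon>0$ there is $v^*\in B_{E^*}$ with $v^*(e)>1-\varepsilon$ and $\|y^*+v^*\|>2-\varepsilon$. I also use the identifications $(X/\tilde Z)^*=\tilde Z^\perp$ isometrically and $\|u^*+Z^\perp\|_{X^*/Z^\perp}=\|u^*|_Z\|$.

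\emph{$(1)\Rightarrow(2)$ and $(3)\Leftrightarrow(4)$.} The first is trivial. For the second, unwind the definition of narrowness of $q\colon X^*\to X^*/Z^\perp$ with respect to $\Gamma=S_X$. With $\|q(u^*)\|=\|u^*|_Z\|$, it asks for $z^*$ with $\|x^*+z^*\|>2-\varepsilon$ and $\|(e^*-z^*)|_Z\|+|(e^*-z^*)(x)|<\varepsilon$. Condition (4) is the same statement restricted to those $x$ with $e^*(x)>1-\varepsilon$.
\begin{itemize}
\item If $e^*(x)>1-\varepsilon'$, the condition $|(e^*-v^*)(x)|$ small is essentially $v^*(x)>1-\varepsilon$, which gives one direction.
\item Conversely, the restriction to such $x$ is the usual reduction in \cite{narrowrich}. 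Given arbitrary $x$, replace $e^*$ by a functional $\tilde e^*$ that is close to $e^*$ on $Z+\operatorname{span}(x)$ and nearly norms some $\tilde x$ close to $x$; this is obtained via a Bishop--Phelps type perturbation.
\end{itemize}

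\emph{$(4)\Rightarrow(1)$.} Fix a closed $\tilde Z\subset Z$, and put $E=X/\tilde Z$ and $E^*=\tilde Z^\perp$. Take $x+\tilde Z\in S_E$, $x^*\in S_{\tilde Z^\perp}$ and $\varepsilon>0$. Choose a representative $x$ with $\|x\|<1+\eta$, and $e^*\in S_{\tilde Z^\perp}$ with $e^*(x)>1-\eta$. Apply (4) to $x^*$, $e^*$ and $x/\|x\|$ to get $v^*\in B_{X^*}$ with the following properties:
\begin{itemize}
\item $v^*(x)$ is close to $1$;
\item $\|x^*+v^*\|>2-\varepsilon$;
\item $\|(e^*-v^*)|_Z\|<\eta$, and in particular $\|v^*|_{\tilde Z}\|<\eta$, since $e^*$ vanishes on $\tilde Z$.
\end{itemize}
By Hahn--Banach, $\operatorname{dist}(v^*,\tilde Z^\perp)=\|v^*|_{\tilde Z}\|<\eta$, so there is $w^*\in\tilde Z^\perp$ with $\|w^*-v^*\|<\eta$. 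After normalizing, $w^*$ is the functional demanded by the dual characterization in $E$, for $\eta$ small.

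\emph{$(2)\Rightarrow(4)$, the main obstacle.} Hypothesis (2) only controls quotients by subspaces of codimension $\le 2$ in $Z$. Running the previous paragraph backwards, (2) yields a weak form of (4): $\|(e^*-v^*)|_Z\|<\varepsilon$ is replaced by $|(e^*-v^*)(z_k)|<\varepsilon$ for two prescribed $z_1,z_2\in Z$ (take $\tilde Z=Z\cap\ker z_1^{*}\cap\ker z_2^{*}$ appropriately). The task is to upgrade this ``weak$^*$-narrowness on $Z$'' to norm control on $Z$.

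My plan is a convexity/separation argument, as in the proof that weakly narrow implies narrow in \cite{narrowrich}. The key point is that the set
\[
V=\{v^*\in B_{X^*}: v^*(x)>1-\varepsilon,\ \|x^*+v^*\|>2-\varepsilon\}
\]
is, up to small error, stable under the averaging we need. If $v^*_1,\dots,v^*_m\in V$ are all normed by a common $x^{**}$ from a weak$^*$-slice where $x^*$ is also almost normed, then their convex combinations remain in $V$ with slightly worse constants.

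Suppose $e^*|_Z$ is at distance $\ge\varepsilon$ from $\{v^*|_Z: v^*\in V\}$. Hahn--Banach in $Z^*$ would then separate it from the weak$^*$-closed convex hull of these restrictions by an element of $Z^{**}$. Goldstine's theorem lets me approximate that element by some $z\in Z$ on the finitely many relevant functionals. Feeding such $z$ (and $x$, handled via the codimension-two room) into the weak form of (4) produces a contradiction.

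The delicate part is making the convex hull stay inside $V$, which requires choosing the $v^*$ inside a common weak$^*$-slice. If this fails, I would instead iterate the two-point condition to build $v^*$ inductively on a finite $\varepsilon$-net of a finite-dimensional piece of $Z$. That route, however, does not directly give norm control on all of $Z$, which is why I expect the separation route to be the one that works.
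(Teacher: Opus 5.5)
The paper only quotes this theorem, so there is no internal proof to compare with, and I judge your argument on its own. Your steps $(1)\Rightarrow(2)$, $(3)\Rightarrow(4)$ and $(4)\Rightarrow(1)$ are correct; the last one is a complete argument.

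\textbf{The gap is $(2)\Rightarrow(4)$, and your proposed route cannot close it.} The ``weak form'' you want to upgrade asks for some $v^*\in B_{X^*}$ with $v^*(x)>1-\varepsilon$, $\|x^*+v^*\|>2-\varepsilon$ and $|(e^*-v^*)(z_k)|<\varepsilon$ for finitely many prescribed $z_k\in Z$. This holds for \emph{every} closed subspace $Z$ of a Daugavet space, with no use of (2). Indeed, $\{v^*\in B_{X^*}: v^*(x)>1-\varepsilon,\ |(v^*-e^*)(z_k)|<\varepsilon\}$ is a relatively weak$^*$-open subset of $B_{X^*}$ containing $e^*$. In a Daugavet space every such set contains a $v^*$ with $\|x^*+v^*\|>2-\varepsilon$: this is the weak$^*$ analogue of Shvidkoy's lemma, where the dual characterization handles weak$^*$-slices and a weak$^*$ Bourgain lemma plus the usual inductive estimate handles convex combinations of them.

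In particular the weak form holds for $Z=X$, where (4) is false: with $x^*=-e^*$, the requirements $\|x^*+v^*\|>2-\varepsilon$ and $\|e^*-v^*\|<\varepsilon$ are incompatible. So no argument that uses only the weak form can produce norm control on $Z$. Your separation step is where it breaks. The weak form says that $e^*|_Z$ lies in the weak$^*$-closure of $\{v^*|_Z: v^*\in V\}$, so no element of $Z$ can separate it from that set. Separating it from the norm-closed convex hull gives only some $\Phi\in Z^{**}$. Goldstine cannot replace $\Phi$ by a $z\in Z$, because the separating inequality must hold on infinitely many functionals at once.

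\textbf{The missing idea is the choice of the codimension-two subspace.} Take $\tilde Z:=Z\cap\ker e^*\cap\ker x^*$, so that $X/\tilde Z$ has the Daugavet property by (2). Now $e^*$ and $x^*$ are norm-one elements of $\tilde Z^\perp=(X/\tilde Z)^*$. Hence the Daugavet property of $X/\tilde Z$, again in its weak$^*$-open form, can be applied to $x^*$ at all. It produces $v^*$ \emph{inside} $B_{\tilde Z^\perp}$, in any prescribed weak$^*$-neighbourhood of $e^*$, with $\|x^*+v^*\|>2-\eta$.

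Membership in $\tilde Z^\perp$ is the norm information your plan throws away. It forces $(e^*-v^*)|_Z\in\operatorname{span}\{e^*|_Z,x^*|_Z\}$, a space of dimension at most $2$ on which the weak$^*$ and norm topologies agree. Choose $z_1,\dots,z_k\in B_Z$ with $\|f\|\le 2\max_i|f(z_i)|$ on this span, and use the neighbourhood $\{v^*\in B_{\tilde Z^\perp}:|(v^*-e^*)(x)|<\eta,\ |(v^*-e^*)(z_i)|<\eta\}$. You get $\|(e^*-v^*)|_Z\|\le 2\eta$ and $|(e^*-v^*)(x)|<\eta$ for an \emph{arbitrary} $x\in S_X$. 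After normalizing $v^*$, this is (3) directly. Your trivial $(3)\Rightarrow(4)$ and your $(4)\Rightarrow(1)$ then close the cycle.

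This also makes your Bishop--Phelps reduction for $(4)\Rightarrow(3)$ unnecessary, which matters because as written it does not work. A norm-one $\tilde e^*$ that is close to $e^*$ at $x$ and nearly norms a point close to $x$ exists only when $e^*(x)\approx 1$. That is exactly the case (4) already covers.
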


It was shown in \cite[Theorem 6.10]{quotient} that in every $C(K)$-space with perfect metric compact $K$ and in every separable $L^1(\Omega, \Sigma, \mu)$-space with non-atomic $\mu$, there is a subspace isomorphic to $\ell^1$ that is not poor. By modifying the proof, we show that the statement holds in every separable Banach space with the Daugavet property.

\begin{theorem}
Let $X$ be a separable Banach space with the Daugavet property. Then $X$ contains a subspace isomorphic to $\ell^1$ which is not poor in $X$.
\end{theorem}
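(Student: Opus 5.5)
The plan is to build a closed subspace $Z\subset X$, isomorphic to $\ell^1$, such that the quotient $X/Z$ itself fails the Daugavet property. Taking $\tilde Z=Z$ in Definition 3.1 then shows that $Z$ is not poor. The quotient will fail the Daugavet property because it will be ``almost one-dimensional'' in a controlled direction: its unit ball will have a slice of small diameter, whereas in a Daugavet space every slice of the unit ball has diameter $2$.

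Concretely, fix $f\in S_{X^*}$ and $x_0\in S_X$ with $f(x_0)=1$, and fix a small $\theta\in(0,1/10)$. Let $(d_n)$ be a dense sequence in $S_{\ker f}$; this is where separability is used. I want $z_n\in\ker f$ such that:
\begin{enumerate}
\item[(a)] $\|z_n-d_n\|\le\theta$ for every $n$;
\item[(b)] $(z_n)$ is equivalent to the unit vector basis of $\ell^1$.
\end{enumerate}
Put $Z:=\overline{\operatorname{span}}\{z_n\}$. Since $Z\subset\ker f$, the functional $\hat f$ induced by $f$ on $X/Z$ has norm $1$, and $\|q(x_0)\|=1$. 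By (a) and density, every $y\in\ker f$ satisfies $\|q(y)\|\le(\theta+o(1))\|y\|$. Now decompose any $x\in X$ as $x=f(x)x_0+y$ with $y\in\ker f$. Using the density of $(d_n)$ in $S_{\ker f}$ together with (a), the quotient norm satisfies $\bigl|\|q(x)\|-|f(x)|\bigr|\le C\theta\|x\|$ for an absolute constant $C$. Hence the slice $\operatorname{Slice}(B_{X/Z},\hat f,\theta)$ is contained in a ball of radius $O(\theta)$ around $q(x_0)$, so its diameter is small. Therefore $X/Z\notin\mathrm{DPr}$.

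The construction of the $z_n$ is inductive and uses the Daugavet property of the finite-codimensional subspace $\ker f$. Alternatively, one can work in $X$ and correct the $f$-component afterwards, using $x_0$. At step $n$, let $E_{n-1}=\operatorname{span}(z_1,\dots,z_{n-1},d_n)$. Using characterization (2) of Theorem 1.1 and the usual finite-net argument, I choose $u_n\in S_{\ker f}$ such that
\[
\|e+tu_n\|\ge(1-\varepsilon_n)(\|e\|+|t|)\qquad\text{for all }e\in E_{n-1},\ t\in\mathbb{R},
\]
with $\prod_n(1-\varepsilon_n)>1/2$. I then set $z_n:=d_n+\theta u_n$, which gives (a) immediately.

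The main obstacle is the lower $\ell^1$-estimate in (b). The naive iteration produces $\|e+a_nd_n\|$ rather than $\|e\|$, and the bound $\|e+a_nd_n\|\ge\|e\|-|a_n|$ loses too much. My first attempt would be to avoid comparing with $\|e\|$ altogether and instead build the $\ell^1$ structure through the perturbations. I would choose the $u_n$ so that the family $(u_k)_{k\le n}$, taken together with all of $\operatorname{span}(d_1,\dots,d_n)$, is $(1-\varepsilon_n)$-$\ell^1$ ``orthogonal'': at step $n$, take $u_n$ almost $\ell^1$-orthogonal to $\operatorname{span}(d_1,\dots,d_n,u_1,\dots,u_{n-1})$. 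This gives $\|\sum a_k d_k+\theta\sum a_ku_k\|\ge c\,\theta\sum|a_k|$ after projecting away the $d$-part. The projection step can be handled by applying the inequality once for each $u_k$ in reverse order, each time absorbing the $d$-terms into the ``$e$'' part, since these all lie in the span chosen before $u_k$. The lower constant is then of order $\theta$, while the upper bound $\sum|a_k|(1+\theta)$ is trivial. If this ordering argument fails, the fallback is to enlarge $E_{n-1}$ to $\operatorname{span}(d_1,\dots,d_N)$ with $N\ge n$ growing, and to accept only a $c\theta$-equivalence with $\ell^1$, which is all that (b) requires.
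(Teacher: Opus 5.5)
Your plan has a fatal gap. Conditions (a) and (b) are incompatible, so the difficulty you flag in (b) cannot be overcome by any choice of $u_n$.

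Here is why. Since $Z\subset\ker f$, for $y\in\ker f$ and $z\in Z$ we also have $y-z\in\ker f$. Your estimate $\|q(y)\|\le\theta\|y\|$ on $\ker f$ does follow from (a) and density, and it gives $\|q(y)\|=\|q(y-z)\|\le\theta\|y-z\|$. Taking the infimum over $z\in Z$ yields $\|q(y)\|\le\theta\|q(y)\|$, hence $q(y)=0$ because $\theta<1$. So (a) forces $Z=\ker f$: your ``almost one-dimensional'' quotient is exactly one-dimensional, and $X\cong Z\oplus\mathbb{R}$. If (b) also held, $X$ would be isomorphic to $\ell^1$. That is impossible for a space with the Daugavet property: every slice of $B_X$ has diameter $2$, so $X$ fails the RNP, while $\ell^1$ has it.

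The obstruction shows up concretely in your ordering argument. When you peel off $\theta a_k u_k$, the $e$-part still contains $a_{k+1}d_{k+1},\dots,a_nd_n$. These were not in the span against which $u_k$ was chosen, so the almost-orthogonality of $u_k$ does not apply. Enlarging that span to $\operatorname{span}(d_1,\dots,d_N)$ never captures all later $d_j$, and by the argument above no scheme can.

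The missing idea is to reverse the roles: let the $\ell^1$ part dominate and the dense part be the perturbation. Then refute poorness through condition (4) of Theorem 3.2 rather than by trying to make $X/Z$ itself non-Daugavet via a small slice. The paper's proof runs as follows.

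\begin{itemize}
\item Take $(u_n)\subset S_X$ with $\|\sum a_nu_n\|\ge\frac{9}{10}\sum|a_n|$, a dense sequence $(x_n)$ in $S_X$, and $v_n:=u_n+\frac{8}{10}x_n$. This is still equivalent to the $\ell^1$ basis, with lower constant $\frac{1}{10}$.
\item Choose $e^*\in S_{X^*}$ vanishing on $\overline{\operatorname{span}}\{u_n\}$; this is a proper subspace since $X\not\cong\ell^1$. Put $x^*=-e^*$.
\item If $Z=\overline{\operatorname{span}}\{v_n\}$ were poor, (4) would give $v^*$ with $\|e^*-v^*\|>2-\varepsilon$, yet $|(e^*-v^*)(v_n)|<\frac{2}{10}$ for all $n$.
\item Using $e^*(u_i)=0$, write $(e^*-v^*)(x_i)=\frac{2}{10}(e^*-v^*)(x_i)+\bigl(e^*(\frac{8}{10}x_i)-v^*(v_i)\bigr)+v^*(u_i)$. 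This bounds $|(e^*-v^*)(x_i)|$ by $\frac{16}{10}$ for every $i$, contradicting the density of $(x_i)$.
\end{itemize}

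Separability enters only through the dense sequence. The $\ell^1$ structure survives because the perturbation size $\frac{8}{10}$ is below the $\ell^1$ constant $\frac{9}{10}$, exactly the opposite of your $z_n=d_n+\theta u_n$.
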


\begin{proof}
Since $X$ has the Daugavet property, we can find a sequence $\{u_n\} \subset S_X$ such that
\[
    \Big\| \sum_{n=1}^m a_n u_n \Big\| \geq \frac{9}{10} \sum_{n=1}^m |a_n|,
\]
for all $a_1, \dots, a_m \in \mathbb{R}$ and $m \in \mathbb{N}$. Let $\{x_n\} \subset S_X$ be a dense sequence in $S_X$. Then the sequence $v_n := u_n + \frac{8}{10} x_n$ is still equivalent to the unit vector basis of $\ell_1$, since
\[
    \Big\| \sum a_n v_n \Big\| \geq \Big\| \sum a_n u_n \Big\| - \frac{8}{10} \Big\| \sum a_n x_n \Big\|
    \geq \frac{9}{10} \sum |a_n| - \frac{8}{10} \sum |a_n| = \frac{1}{10} \sum |a_n|.
\]

Since Banach spaces with the Daugavet property are never isomorphic to $\ell^1$, $\overline{\operatorname{span}\{u_n\}}$ is a proper subspace of $X$; hence we may choose $e^* \in S_{X^*}$ such that $e^* \equiv 0$ on $\overline{\operatorname{span}\{u_n\}}$. Let $x^* := -e^*$, and fix a positive number $\varepsilon < 1/10$.

Suppose that $\overline{\operatorname{span}\{v_n\}}$ is poor. Then, by (4) in Theorem 3.2, we may find $v^* \in S_{X^*}$ such that
\[
    \|v^* - e^*\| = \|x^* + v^*\| > 2 - \varepsilon,
    \qquad
    \big\| (e^* - v^*)|_{\overline{\operatorname{span}\{v_n\}}} \big\| < \varepsilon.
    \tag{1}
\]
By the second inequality in (1), we have
\[
    \left| e^*\!\left( \tfrac{8}{10} x_n \right) - v^*(u_n + \tfrac{8}{10} x_n) \right|
    = \left| e^*(u_n + \tfrac{8}{10} x_n) - v^*(u_n + \tfrac{8}{10} x_n) \right| < \frac{2}{10},
    \tag{2}
\]
for every $n \in \mathbb{N}$. By the first inequality in (1), we may find some $x_i$ from our dense sequence such that $|(e^* - v^*)(x_i)| > 2 - \tfrac{1}{10}$, and hence, using (2),
\begin{align*}
    1 + \frac{6}{10}
    &> \frac{4}{10} + 1 + \left| e^*\!\left( \tfrac{8}{10} x_i \right) - v^*(u_i + \tfrac{8}{10} x_i) \right| \\
    &> \left| \tfrac{2}{10} (e^* - v^*)(x_i) + \left( e^*\!\left( \tfrac{8}{10} x_i \right) - v^*(u_i + \tfrac{8}{10} x_i) \right) + v^*(u_i) \right| \\
    &= \left| (e^* - v^*)(x_i) \right| \\
    &> 2 - \frac{1}{10}.
\end{align*}
This is a contradiction.
\end{proof}

The next definition was introduced in \cite{quotient}.

\begin{definition}
Let $X \in \mathrm{DPr}$. A subspace $E \subset X$ is said to be a \emph{bank} if $E$ contains an isomorphic copy of $\ell^1$ and for every poor subspace $Z$ of $X$, $q_E(B_Z)$ is nowhere dense in $B_{X/E}$ (here $q_E$ denotes the natural quotient map $q_E \colon X \to X/E$). If $E \subset X$ is a bank, then $B_{X/E}$ will be called the \emph{asset} of $E$.

\end{definition}

This is equivalent to saying that $E$ is a bank if it contains an isomorphic copy of $\ell_1$, while $\overline{Z + E} \neq X$ for every poor subspace $Z$ in $X$. The following was proved in \cite{quotient}, and it was asked whether the separability condition on the asset can be removed.

\begin{theorem}
Let $X \in \mathrm{DPr}$ and $E \subset X$ be a bank with separable asset. Then $X$ contains a copy of $\ell_1$ which is not poor in $X$.
\end{theorem}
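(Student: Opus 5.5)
We adapt the proof of Theorem 3.3, replacing the dense sequence of $S_X$ by a sequence whose images under $q_E$ are dense in the separable asset $B_{X/E}$. We cannot use density in $S_X$ itself, since $X$ need not be separable. The idea is to build an $\ell^1$-sequence $v_n = u_n + \tfrac{8}{10}x_n$, where $u_n \in E$ and the $x_n$ are chosen so that the $q_E(x_n)$ are dense in $S_{X/E}$. We then show that poorness of $\overline{\operatorname{span}\{v_n\}}$ contradicts a norming property of $q_E(x_n)$ for functionals in $E^\perp = (X/E)^*$.

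\emph{Step 1: the $\ell^1$-sequence.} Since $E$ contains an isomorphic copy of $\ell^1$, James' distortion theorem gives a sequence $(u_n) \subset S_E$ with $\|\sum a_n u_n\| \ge \tfrac{9}{10}\sum|a_n|$. Since $X/E$ is separable, we choose $x_n \in S_X$ such that $(q_E x_n)$ is dense in $S_{X/E}$ and $\|q_E x_n\|$ is close to $1$ (first take a dense sequence in $S_{X/E}$, then lift with norm at most $1+\eta$ and renormalize). The computation of Theorem 3.3 shows that $v_n := u_n + \tfrac{8}{10}x_n$ is equivalent to the $\ell^1$-basis, so $Z := \overline{\operatorname{span}\{v_n\}}$ is an isomorphic copy of $\ell^1$.

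\emph{Step 2: choice of $e^*$ and use of poorness.} Because $E$ is a bank, $X/E \neq 0$, so we may pick $e^* \in S_{X^*}$ vanishing on $E$, hence on every $u_n$. Suppose $Z$ is poor. Applying Theorem 3.2(4) with $x^* := -e^*$, a small $\varepsilon$, and a point $x$ with $e^*(x) > 1-\varepsilon$, we obtain $v^*$ with
\[
\|v^* - e^*\| > 2-\varepsilon
\quad\text{and}\quad
\|(e^*-v^*)|_Z\| < \varepsilon.
\]
As in estimate (2) of Theorem 3.3, testing on $v_n$ gives
\[
\bigl|\tfrac{8}{10}(e^*-v^*)(x_n) - v^*(u_n)\bigr| < \varepsilon(1+\tfrac{8}{10}).
\]

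\emph{Step 3 (main obstacle).} In Theorem 3.3 the contradiction came from finding some $x_i$ with $|(e^*-v^*)(x_i)|$ close to $2$, which needed density in $S_X$. Here the $x_n$ are dense only modulo $E$, so this works only if $e^*-v^*$ almost attains its norm modulo $E$. That requires controlling $\|(e^*-v^*)|_E\|$, which poorness of $Z$ alone does not give.

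My first attempt is to exploit the bank property more strongly. Since $\overline{Z+E} \neq X$ for every poor $Z$, the space $\overline{Z+E}$ is proper whenever $Z$ is poor. I would choose the $x_n$ so that $\overline{Z+E} = X$ holds automatically. Indeed, $Z+E$ contains $v_n - u_n = \tfrac{8}{10}x_n$, and $\overline{\operatorname{span}}(E \cup \{x_n\}) = X$ because the $q_E x_n$ are dense in $S_{X/E}$. Hence $\overline{Z+E} = X$. Then, by the reformulated definition of a bank, $Z$ cannot be poor, which is exactly the desired conclusion.

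If this works, Steps 2–3 become unnecessary and the whole proof reduces to Step 1 plus this density observation. The only point to check carefully is the equivalence, stated after Definition 3.4, between nowhere density of $q_E(B_Z)$ and $\overline{Z+E} \neq X$. I would rely on that equivalence as stated in the paper.
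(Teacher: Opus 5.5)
Your final argument (Step~1 plus the density observation) is the right idea, and Steps~2--3 can be dropped. Note that the paper gives no proof of this statement; it quotes it from \cite{quotient}. The weak point is that you invoke the reformulation stated after Definition~3.4, and you use it in the direction that is not justified.

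For a fixed closed subspace $Z$, $\overline{Z+E}\neq X$ does imply that $q_E(B_Z)$ is nowhere dense, because a proper closed subspace of $X/E$ has empty interior. The converse fails in general. If $q_E|_Z$ is, for instance, a compact operator with dense range, then $\overline{Z+E}=X$, yet $\overline{q_E(B_Z)}$ has empty interior. So ``$\overline{Z+E}\neq X$ for every poor $Z$'' is a priori stronger than Definition~3.4. Knowing only $\overline{Z+E}=X$ for your $Z$ therefore does not contradict the bank property as actually defined.

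The repair comes directly from your own construction, which gives more than density of $Z+E$. Let $(y_n)$ be the dense sequence in $S_{X/E}$ that you lift, with lifts $\tilde x_n$ and $x_n=\tilde x_n/\|\tilde x_n\|$. Since $\|v_n\|\le\tfrac{18}{10}$, the vector $\tfrac{10}{18}v_n$ lies in $B_Z$. Its image is $q_E(\tfrac{10}{18}v_n)=\tfrac49 q_E(x_n)=t_ny_n$ with $t_n\in[\tfrac{4}{9(1+\eta)},\tfrac49]$. The set $\overline{q_E(B_Z)}$ is closed and absolutely convex, so it contains $\tfrac{4}{9(1+\eta)}y_n$ for every $n$. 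Hence it contains $\tfrac{4}{9(1+\eta)}S_{X/E}$, and therefore the ball $\tfrac{4}{9(1+\eta)}B_{X/E}$. Thus $q_E(B_Z)$ is not nowhere dense in $B_{X/E}$, and Definition~3.4 itself shows that $Z\cong\ell^1$ is not poor.

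With this change the proof is complete. It also explains why only separability of the asset is needed. The bank hypothesis replaces both the functional argument via Theorem~3.2(4) and the density in $S_X$ that Theorem~3.3 requires.
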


The next theorem shows that the separability condition cannot be removed.

\begin{theorem}
$L^\infty[0,1]$ has a bank, and every separable subspace of it is poor in $L^\infty[0,1]$.
\end{theorem}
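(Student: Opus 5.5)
We treat $L^\infty[0,1]$ as $C(K)$, where $K$ is the hyperstonean Stone space of the measure algebra; $K$ has no isolated points and is an $F$-space. The proof splits into two independent parts: producing a bank, and showing that separable subspaces are poor. For the second part we use criterion (4) of Theorem~3.2.

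\textbf{Separable subspaces are poor.} Let $Z\subset C(K)$ be separable. The closed unital subalgebra generated by $Z$ is $C(L)$ for a compact metrizable $L$, and there is a continuous surjection $\pi\colon K\to L$ with $Z\subset C(L)\circ\pi$. Every fibre $\pi^{-1}(l)$ is a nonempty closed $G_\delta$ subset of $K$.

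\emph{Fibres are large.} A compact $F$-space has no nontrivial convergent sequences and hence no non-isolated $G_\delta$ points. Moreover every infinite closed subset contains a copy of $\beta\mathbb N$. So each set $\pi^{-1}(l)\cap\{t:|x(t)-x(t_0)|<\eta\}$ with $\pi(t_0)=l$ is uncountable, and in fact of cardinality $2^{\mathfrak c}$. Given $x^*,e^*\in S_{X^*}$ and $x$ with $e^*(x)>1-\varepsilon$, we regard $x^*,e^*$ as regular Borel measures on $K$. We aim for $v^*\in B_{X^*}$ with the following three properties:
\begin{enumerate}
\item $v^*\perp|x^*|$, which gives $\|x^*+v^*\|=1+\|v^*\|$ and hence $\|x^*+v^*\|>2-\varepsilon$ once $\|v^*\|>1-\varepsilon$;
\item $v^*(x)>1-\varepsilon$;
\item $v^*\circ\pi^{-1}$ is close to $e^*\circ\pi^{-1}$ in the norm of $C(L)^*$, which forces $\|(e^*-v^*)|_Z\|<\varepsilon$.
\end{enumerate}

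\emph{Construction of $v^*$.} First we discretise. We partition $L$ into finitely many Borel sets and $K$ into finitely many clopen sets on which $x$ has oscillation $<\varepsilon/2$. This lets us replace $e^*$, up to $\varepsilon$ in the relevant seminorms, by the transport of $e^*$ along a Borel map $\sigma\colon K\to K$. The map should satisfy $\pi\circ\sigma=\pi$ and $|x(\sigma t)-x(t)|<\varepsilon/2$. We set $v^*=\sigma_*e^*$. Then $v^*\circ\pi^{-1}=e^*\circ\pi^{-1}$ exactly, so $(e^*-v^*)|_Z=0$, and $v^*(x)>e^*(x)-\varepsilon/2$.

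\emph{Main obstacle.} The hard step is to choose $\sigma$ so that $\sigma_*e^*\perp|x^*|$. Avoiding the countably many atoms of $x^*$ is easy, since each admissible fibre piece is uncountable. The diffuse part of $x^*$ is the real difficulty. The first approach we would try uses that $K$ is hyperstonean. Every measure is then concentrated, up to a singular part, on a union of supports of normal measures, and these supports are clopen. The fibre pieces are closed $G_\delta$ sets with empty interior relative to such supports, because $L$ is metrizable while normal measures on the Stonean space of a separable measure algebra see no closed $G_\delta$ null set. We would therefore try to select $\sigma(t)$ inside the nowhere dense remainder of each fibre piece, which carries no $|x^*|$-mass. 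This has to be done measurably, by a Kuratowski--Ryll-Nardzewski type selection on the finitely many pieces. If the selection fails, the fallback is a weak$^*$ limit argument over finite partitions, keeping the three estimates uniform.

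\textbf{The bank.} We take $E=\{f\in L^\infty[0,1]: f=0 \text{ a.e. on } [0,1/2]\}\cong L^\infty$, which contains $\ell^1$. It remains to show that $\overline{Z+E}\neq X$ for every poor $Z$. Suppose $\overline{Z+E}=X$. Then $q_E(Z)$ is dense in $X/E\cong L^\infty[0,1/2]$, which is nonseparable. The plan is to build, by a transfinite or Boolean-algebra argument, a closed subspace $\tilde Z\subset Z$ of codimension $\le 2$ in $Z$ for which $X/\tilde Z$ fails the slice-diameter-$2$ property. By Theorem~3.2(2) this contradicts poorness of $Z$. This part is less routine than the first, and we would model the witnessing functionals on those used in the proof of Theorem~3.3.
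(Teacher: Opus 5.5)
There is a genuine gap in the poorness half, and it sits exactly at the step you call the main obstacle. Producing $v^*$ is the whole content of the claim, and the proposal does not produce it.

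The heuristic you sketch does not work as stated:
\begin{itemize}
\item In the Stone space $K$ of the measure algebra, a Borel set is Lebesgue-null if and only if it is meager. So the part of $x^*$ singular to Lebesgue measure is concentrated on a countable union of closed nowhere dense sets. The ``nowhere dense remainder'' of the fibre pieces is therefore precisely where $|x^*|$ may carry mass, not a region it avoids. For instance, $x^*$ can be a diffuse measure concentrated on one nowhere dense fibre, which contains a copy of $\beta\mathbb{N}$.
\item Fibres need not have empty interior: if $Z$ is spanned by $\chi_{[0,1/2]}$, they are clopen.
\item The counting trick that avoids atoms works for finitely many pieces. But when $\pi_*e^*$ is diffuse on an uncountable $L$, the constraint $\pi\circ\sigma=\pi$ (or closeness in $C(L)^*$) rules out any reduction to finitely many pieces. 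You would then need a measurable selection into the non-metrizable space $K$, where Kuratowski--Ryll-Nardzewski does not apply.
\item Moreover $\sigma_*e^*$ need not be a Radon measure. Orthogonality of Borel measures would then not give $\|x^*+v^*\|=1+\|v^*\|$ for the functional on $C(K)$.
\item The weak$^*$-limit fallback also fails in general, because a lower bound $\|x^*+v^*\|>2-\varepsilon$ is not preserved under weak$^*$ limits (the norm is only weak$^*$ lower semicontinuous).
\end{itemize}

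The missing idea is that exact orthogonality is more than you need: it suffices that $v^*$ and $x^*$ almost norm a common vector. The paper proceeds as follows:
\begin{itemize}
\item Take a dense sequence $(x_n)$ in $S_{Z+\mathbb{R}x}$ closed under $x_n\mapsto -x_n$.
\item A countable form of the Daugavet property of $L^\infty[0,1]$ (Proposition~5.4 of \cite{unendlichmunendlich}) gives a single $g$ with $x^*(g)>1-\varepsilon$ and $\|g+x_n\|>2-\varepsilon$ for all $n$.
\item Hence $\|h+rg\|\ge(1-\varepsilon)(\|h\|+|r|)$ for $h\in Z+\mathbb{R}x$. So $h+rg\mapsto e^*(h)+r$ has norm at most $(1-\varepsilon)^{-1}$.
\item A Hahn--Banach extension $\mu$ of this functional agrees with $e^*$ on $Z+\mathbb{R}x$ and satisfies $(\mu+x^*)(g)>2-\varepsilon$.
\end{itemize}
No measure theory on $K$ is needed.

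The bank half is not proved at all. For your $E=\{f: f=0 \text{ a.e. on } [0,1/2]\}$ you only announce a plan (``a transfinite or Boolean-algebra argument'') for showing $\overline{Z+E}\neq L^\infty[0,1]$ for every poor $Z$. You give no indication of which $\tilde Z$ and which slice would contradict poorness. The paper does not construct a bank by hand. It notes that $L^\infty[0,1]\cong C(K)$ with $K$ perfect and applies Corollary~6.4 of \cite{quotient}. You need either that reference or an actual argument.
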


\begin{proof}

Let $X$ be a separable subspace of $L^\infty[0,1]$. Let $x^*, e^* \in S_{L^\infty[0,1]^*}$, $\varepsilon > 0$, and $f \in S_{L^\infty[0,1]}$ be such that $e^*(f) > 1 - \varepsilon$. Choose a dense sequence $\{x_n\}$ in $S_{X + \mathbb{R} f}$ that is closed under taking antipodes. Let
\[
    S := \{ q \in B_{L^\infty[0,1]} \mid x^*(q) > 1 - \varepsilon \}.
\]
Note that, by \cite[Proposition 5.4]{unendlichmunendlich}, or by an elementary consideration, there is some $g \in S$ such that
\[
    \|g + x_n\| > 2 - \varepsilon \quad \text{for every } n \in \mathbb{N}. 
\]
We can show by a standard argument that
\[
\|ag + bx_n\| \geq |a| + |b| - \varepsilon \max(\lvert a \lvert, \lvert b \lvert) \geq (|a| + |b|)(1- \varepsilon) \tag{1}
\]
for all $a, b \in \mathbb{R}$ and $n \in \mathbb{N}$. Indeed, since $\{x_n\}_{n \in \mathbb{N}}$ is closed under taking antipodes, we can assume $a, b \geq 0$. Let $a \geq b$. Then
\[
\|ag + bx_n\| \geq \|a(g + x_n) - (a-b)x_n\| \geq a\|g + x_n\| - (a-b)\|x_n\| \geq a(2-\varepsilon) - (a-b) \geq a + b - a\varepsilon.
\]

Now define $\mu \colon X + \mathbb{R}f + \mathbb{R}g \to \mathbb{R}$ by
\[
    \mu(h + rg) = e^*(h) + r,
\]
for $h \in X + \mathbb{R}f$ and $r \in \mathbb{R}$, which is well defined, since $g$ is linearly independent from $X + \mathbb{R}f$. By (1) and the density of $\{x_n\}$ in $S_{X + \mathbb{R}f}$, we have
\[
    |\mu(h + rg)| \leq \|h\| + |r| \leq \frac{1}{1 - \varepsilon} \|h + rg\|,
\]
and therefore $\|\mu\| \leq 1/(1 - \varepsilon)$.

Extend $\mu$ norm-preservingly to the whole space $X$ using the Hahn--Banach theorem. We have $\mu(g) = 1$, and therefore $\|\mu + x^*\| > 2 - \varepsilon$, since $g \in S$. Also, $\mu \equiv e^*$ on $X + \mathbb{R}f$. Therefore, $X$ is poor in $L^\infty[0,1]$.

Since $L^\infty[0,1]$ is isometrically isomorphic to $C(K)$, where $K$ is a perfect compact Hausdorff space, we get by \cite[Corollary 6.4]{quotient} that $L^\infty[0,1]$ has a bank.

\end{proof}

\section*{Acknowledgements}

The author would like to express his gratitude to Dirk Werner and Abraham Rueda Zoca for their valuable advice and corrections on parts of the paper.

\bibliographystyle{plain}
\bibliography{mybib}

\end{document}